\documentclass[12pt]{amsart}
\usepackage{amsthm,amsmath, amsfonts, amssymb, comment}
\newtheorem{theorem}{Theorem}[section]
\newtheorem{lemma}[theorem]{Lemma}
\newtheorem{proposition}[theorem]{Proposition}

\theoremstyle{definition}
\newtheorem{definition}[theorem]{Definition}

\theoremstyle{remark}

\usepackage{mathrsfs}
\usepackage[english]{babel}
\usepackage{graphicx}   
\usepackage{tikz}
\usepackage{subcaption}
\usepackage{url}        
\usepackage{bm}         
\usepackage{multirow}
\usepackage{booktabs}
\usepackage{algorithm}
\usepackage{algorithmic}
\usepackage{enumerate}

\newcommand{\aut}{{\mbox{\rm Aut}}}
\newcommand{\id}{{\mbox{\rm id}}}

\usepackage[colorlinks, linkcolor=black]{hyperref}

\providecommand*{\remarkautorefname{Remark}}
\hypersetup{colorlinks, bookmarks, unicode} 
\usepackage{multicol}
\usepackage[noadjust]{cite}
\usepackage{indentfirst}

\begin{document}

\title[Omnigenous Locally Finite Groups]{On the Isomorphism Relation for Omnigenous Locally Finite Groups}

\author{Su Gao}
\address{School of Mathematical Sciences and LPMC, Nankai University, Tianjin 300071, P.R. China}
\email{sgao@nankai.edu.cn}
\thanks{The authors acknowledge the partial support of their research by the Fundamental Research Funds for the Central Universities and by the National Natural Science Foundation of China (NSFC) grant 12271263.}

\author{Feng Li}
\address{School of Mathematical Sciences and LPMC, Nankai University, Tianjin 300071, P.R. China}
\email{fengli@mail.nankai.edu.cn}

\date{\today}

\begin{abstract} 
The concept of an omnigenous locally finite group was introduced in \cite{EGLMM} as a generalization of Hall's universal countable locally finite group. In this paper we show that the class of all countable omnigenous locally finite groups is Borel complete, hence it has the maximum Borel cardinality of isomorphism types among all countable structures.
\end{abstract}

\maketitle
\section{Introduction}    

In \cite{Hall}, Philip Hall proved the existence and the uniqueness, up to isomorphism, of a countable locally finite group $\mathbb{H}$ with the following properties:
\begin{enumerate}
\item[(i)] Every finite group can be embedded in $\mathbb{H}$;
\item[(ii)] For any finite groups $F, G$ and embeddings $\varphi\colon F\to \mathbb{H}$ and $i\colon F\to G$, there is an embedding $\Phi\colon G\to \mathbb{H}$ such that $\varphi=\Phi\circ i$.
\end{enumerate}
In modern model theory (see e.g. \cite{Ho}) property (i) is called {\em universality} (for finite groups) and property (ii) is called {\em weak homogeneity}. It follows from the weak homogeneity that $\mathbb{H}$ is also universal for all countable locally finite groups. For this reason, $\mathbb{H}$ is now called {\em Hall's universal countable locally finite group} (see e.g. \cite{EGLMM}). For readers who are familiar with the Fra\"{i}se\'{e} theory, $\mathbb{H}$ can also be regarded as the Fra\"{i}se\'{e} limit of the Fra\"{i}se\'{e} class of all finite groups. In any case, $\mathbb{H}$ turns out to be a rather canonical object.

In \cite{EGLMM} the following notion more general than the weak homogeneity was introduced. We call a locally finite group $\Gamma$ {\em omnigenous} if for any finite groups $F, G$ and embeddings $\varphi\colon F\to \Gamma$ and $i\colon F\to G$, there is a finite subgroup $H$ of $\Gamma$ and a surjective homomorphism $\Psi\colon H\to G$ such that $\Psi\circ \varphi=i$. Of course, $\mathbb{H}$ is omnigenous. It was shown in \cite{EGLMM} that, in contrast to the uniqueness of $\mathbb{H}$, there exist continuum many pairwise non-isomorphic countable omnigenous locally finite groups. Furthermore, each of these continuum many groups can be made to contain a copy of $\mathbb{H}$ as a subgroup, and therefore is universal for all countable locally finite groups. 

In this paper, we study the isomorphism relation for all countable omnigenous locally finite groups from the point of view of descriptive set theory. To prove our main result we use the framework of invariant descriptive set theory (see \cite{GaoBook}), particularly the notion of Borel reduction between equivalence relations on Polish spaces.



Friedman and Stanley \cite{FS} introduced and studied the notion of Borel reducibility between the isomorphism relations of Borel classes of countable structures. They discovered that among all isomorphism relations of Borel classes of countable structures there is a most complex one in terms of Borel reducibility, and they called such Borel classes Borel complete. More precisely, a Borel class of countable structures or the isomorphism relation of such a class is {\em Borel complete} if the isomorphism relation of any kind of countable structures is Borel reducible to it. In other words, Borel complete isomorphism relations have the maximum complexity in the Borel reducibility hierarchy among all isomorphism relations of countable structures. 

Friedman and Stanley \cite{FS} proved the Borel completeness for many Borel classes of countable structures. These include the classes of countable graphs, countable (rooted) trees, countable linear orders, and countably infinite fields of characteristic $p$ (where $p$ is zero or a prime). In particular, using results of Mekler \cite{Me} (or see \cite[A.3]{Ho}), they showed that the isomorphism relation of all countable groups is Borel complete (see \cite[(2.3)]{FS}). In fact, Mekler's result demonstrates that the isomorphism relation of all countable nilpotent groups of class 2 and of exponent $p$, where $p$ is any odd prime, is Borel complete. Furthermore, the groups constructed by Mekler are all locally finite, and therefore the isomorphism relation of all countable locally finite groups is Borel complete. 

After the work of Friedman and Stanley, more Borel classes of countable structures were studied and found to be Borel complete by various authors. These include the class of all countable Boolean algebras (Camerlo and Gao \cite{CG}), certain subclasses of countable graphs and other homogeneous countable structures (Clemens \cite{Cl}; Clemens, Coskey, and Potter \cite{CCP}), and countable models of set theory (Clemens, Coskey, and Dworetzky \cite{CCD}) and arithmetic (Coskey and Kossak \cite{CK}). Recently, Paolini and Shelah \cite{PS1} proved that the class of all countable torsion-free abelian groups is Borel complete. This settles a significant question in this area posed by Friedman and Stanley \cite{FS}. 

Meanwhile, the notion of Borel reducibility has been extended to the context of arbitrary equivalence relations on Polish spaces and of classification problems in mathematics. Becker and Kechris \cite{BK} showed that Borel complete isomorphism relations are exactly the most complex orbit equivalence relations induced by Borel actions of the infinite permutation group $S_\infty$ (therefore they are also called {\em $S_\infty$-universal} in the current literature). Many classification problems in mathematics were found to be Borel complete, meaning that these classification problems, as equivalence relations on Polish spaces, are Borel bireducible to a Borel complete isomorphism relation. These include the classification problems of following mathematical objects:
\begin{itemize}
\item[-] Zero-dimensional compact metrizable spaces up to homeomorphism (Camerlo and Gao \cite{CG});
\item[-] Zero-dimensional locally compact Polish metric spaces up to isometry (Gao and Kechris \cite{GK});
\item[-] Polish ultrametric spaces  up to isometry (Gao and Kechris \cite{GK});
\item[-] Dimension groups and (commutative) AF $C^*$-algebras up to isomorphism, and Bratteli diagrams up to equivalence (Camerlo and Gao \cite{CG});
\item[-] Homeomorphisms of the Cantor space (also known as Cantor systems in topological dynamics) up to topogical conjugacy (Camerlo and Gao \cite{CG});
\item[-] Automorphism groups of certain countable structures up to topological conjugacy (Beserra and Coskey \cite{BC}; Coskey and Ellis \cite{CE0}, \cite{CE1}, \cite{CE2}; Coskey, Ellis, and Schneider \cite{CES});
\item[-] Certain Polish metric spaces with a fixed distance set up to isometry (Camerlo, Marcone, and Motto Ros \cite{CMM});
\item[-] Locally compact Polish metric spaces up to isometry (Malicki \cite{Mal}).
\end{itemize}

The main theorem of this paper is the following.
\begin{theorem}\label{main theorem}
The isomorphism relation of countable universal, omnigenous locally finite groups is Borel complete. 
\end{theorem}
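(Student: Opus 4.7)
The plan is to Borel-reduce the isomorphism relation of countable graphs, which is Borel complete by Friedman and Stanley, to the isomorphism relation of countable universal omnigenous locally finite groups. For each countable graph $G$ on vertex set $\omega$, I would construct a countable universal omnigenous locally finite group $\Gamma(G)$ so that the assignment $G\mapsto\Gamma(G)$ is Borel and a complete isomorphism invariant.

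The group $\Gamma(G)$ would be built as the union of a chain $\Gamma_0(G)\le \Gamma_1(G)\le\cdots$ of finite subgroups via a Fra\"iss\'e-style bookkeeping procedure that interleaves three kinds of tasks. The \textbf{marker tasks} embed distinguished finite subgroups encoding each vertex and each edge of $G$ that has appeared by stage $n$. The \textbf{omnigeneity tasks} enumerate pairs $(F\le \Gamma_n(G),\, i\colon F\to L)$ with $L$ a finite overgroup of $F$, and extend the current stage to a finite group that contains some $H$ with a surjection onto $L$ restricting to $i$ on $F$. The \textbf{universality tasks} ensure every finite group gets embedded at some stage. The last two kinds of tasks can be carried out by direct amalgamations of finite groups, as in Hall's original construction, while the marker tasks add a carefully controlled amount of extra information reflecting the graph $G$.

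Borel-ness of $G\mapsto \Gamma(G)$ is immediate from the uniform nature of the construction. The forward implication $G\cong G' \Rightarrow \Gamma(G)\cong\Gamma(G')$ would be proved by running the two constructions in parallel and building an isomorphism between $\Gamma(G)$ and $\Gamma(G')$ by a back-and-forth that exploits the symmetry of the bookkeeping. The converse, that $\Gamma(G)\cong\Gamma(G')$ forces $G\cong G'$, is the heart of the argument: one must show that the markers for vertices and edges can be identified inside $\Gamma(G)$ by a purely group-theoretic, isomorphism-invariant condition, and that the resulting pattern of markers recovers $G$.

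The main obstacle is reconciling the rigidity needed to recover $G$ with the lack of rigidity inherent in omnigeneity, which by its very statement forces many finite configurations to occur repeatedly. A promising design is to encode each vertex not by a single finite subgroup but by an \emph{infinite} pattern---for instance, by an infinite ascending chain of finite subgroups of prescribed, pairwise distinguishable isomorphism types together with a prescribed ``satellite'' pattern around it---and to encode each edge by a joint chain linking two vertex-encodings. Since each individual omnigeneity or universality task is satisfied at a single finite stage and therefore disturbs only finitely many elements, no such task can by itself produce a competing infinite marker chain; hence the genuine markers persist as isomorphism invariants of $\Gamma(G)$, and the graph $G$ is readable as the incidence pattern of these chains. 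Proving this persistence rigorously, and verifying that the marker subgroups can be distinguished from the ``generic'' finite subgroups created by the omnigeneity closure, is the step I expect to be the most delicate.
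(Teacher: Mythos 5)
Your overall strategy (reduce from countable graphs, which are Borel complete) matches the paper's, but the encoding mechanism you propose has a genuine gap, and it is exactly at the step you yourself flag as ``the most delicate.'' Your markers are infinite ascending chains of finite subgroups of prescribed isomorphism types, and your persistence argument is that each omnigeneity or universality task ``disturbs only finitely many elements'' and so cannot produce a competing chain. But infinitely many such tasks are performed, and their cumulative effect is precisely an infinite, highly saturated structure: a countable locally finite group that is universal and omnigenous necessarily contains an enormous supply of finite subgroups in essentially every relative position (indeed, the groups in the paper all contain a copy of Hall's group $\mathbb{H}$, which contains ascending chains realizing \emph{every} sequence of finite group isomorphism types with compatible embeddings). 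So any invariant read off from the isomorphism types of finite subgroups and their inclusion pattern will be replicated throughout the ``generic'' part of the group, and the genuine markers cannot be distinguished from the spurious ones by such data. You have not given, and I do not see, a purely group-theoretic condition that would isolate your marker chains; without it the converse direction $\Gamma(G)\cong\Gamma(G')\Rightarrow G\cong G'$ does not go through.

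The paper evades this obstacle by encoding the graph not in the subgroup structure but in the \emph{normal} subgroup structure, which omnigeneity does not saturate. Starting from Mekler's group $\Gamma(G)$ (nilpotent of class $2$, exponent $p$, locally finite, and recovering $G$ up to isomorphism for nice graphs), one forms a direct limit $D(\Gamma(G),f)$ of copies of $\Gamma(G)\oplus\mathbb{H}$ along the embedding $(s,t)\mapsto(s,f(s,t))$ for a suitable $f\colon\Gamma(G)\oplus\mathbb{H}\to\mathbb{H}$. This limit is omnigenous and universal (it contains $\mathbb{H}$), and it has a \emph{unique} nontrivial simple normal subgroup $S$, contained in every nontrivial normal subgroup, with $D(\Gamma(G),f)/S\cong\Gamma(G)$. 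The quotient by this canonically determined normal subgroup is therefore an isomorphism invariant of the group, and Mekler's theorem recovers $G$ from it. (The forward direction uses a Paolini--Shelah extension theorem for automorphisms of subgroups of $\mathbb{H}$, together with an embedding of every graph into the random graph so that graph isomorphisms extend to automorphisms of the ambient objects.) If you want to salvage your approach, you would need to replace your subgroup-chain markers by some similarly canonical, quotient-like invariant; as written, the recovery step fails.
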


This result is somewhat surprising since the concepts of weak homogeneity and omnigenousness do not seem drastically different at first sight, but our result put the varieties of these countable locally finite groups at the two extremes of the Borel reducibility hierarchy. In the proof of our main theorem we use Mekler's construction as a starting point and reduce the class of all countable graphs to the class of groups under our consideration.



The rest of the paper is organized as follows. In Section~\ref*{sec:2} we give the basic definitions and some preliminary results. In Section \ref*{sec:3}, we present a method of construction for more omnigenous locally finite groups. Finally in Section \ref*{sec:4} we use the construction to prove our main theorem. 

{\em Acknowledgments}. We would like to thank Gianluca Paolini, David Schrittesser, and Mahmood Etedadialiabadi for useful discussions on this topic.

\section{Preliminaries}\label{sec:2}
In this section we review some basic concepts and notation, and give some preliminary results.

\subsection{Invariant descriptive set theory}
In this subsection, all the undefined concepts and basic results without reference can be found in \cite{GaoBook}. 

Recall that a topological space is {\em Polish} if it is separable and completely metrizable. The collection of all {\em Borel} sets in a Polish space is the smallest $\sigma$-algebra containing all open sets. If $X, Y$ are Polish spaces and $f\colon X\to Y$ is a map, then $f$ is {\em Borel} if for any open subset $U$ of $Y$, $f^{-1}(U)$ is a Borel subset of $X$. An equivalence relation $E$ on a Polish space $X$ is {\em Borel} if $E$ is a Borel subset of $X\times X$. 

Given equivalence relations $E, F$ on Polish spaces $X, Y$, respectively, we say that $E$ is {\em Borel reducible} to $F$, denoted $E\leq_B F$, if there is a Borel map $f\colon X\to Y$ such that for any $x_1, x_2\in X$, 
$$ x_1Ex_2\iff f(x_1)Ff(x_2). $$
We say that $E$ is {\em Borel bireducible} with $F$, denoted $E\sim_B F$, if both $E\leq_B F$ and $F\leq_B E$. We say that $E$ is {\em strictly Borel reducible} to $F$, denoted $E<_B F$, if $E\leq_B F$ but $F\not\leq_B E$. 

The quasi order $\leq_B$ gives a measurement of the relative complexity for equivalence relations. We also interpret the Borel reduction $E\leq_B F$ as saying that the {\em Borel cardinality} of the quotient space $X/E$ is at most that of $Y/F$. 

Invariant descriptive set theory studies the relative complexity of equivalence relations on Polish spaces via the notion of Borel reducibility. But before we can apply the concepts and results of invariant descriptive set theory, we need to gather our objects of interest together and form a Polish space. 

Let $\mathcal{L}$ be a countable language. If we denote by $\mathcal{C,R,F}$, respectively, the sets of all constant, relation and function symbols in $\mathcal{L}$, then we can view the space 
\begin{equation*}
    X_{\mathcal{L}}=\prod_{c\in \mathcal{C}} \omega \times \prod_{R\in \mathcal{R}}2^{\omega^{a(R)}}\times \prod_{f\in \mathcal{F}}\omega^{\omega^{a(f)}}
\end{equation*}
as the coding space of all countable $\mathcal{L}$-structures with universe $\omega$, where $a(\cdot)$ denotes the arity of a symbol. The isomorphism relation for all countably infinite $\mathcal{L}$-structures is then an equivalence relation on $X_\mathcal{L}$. If we equip $\omega$ and $2=\{0,1\}$ with discrete topologies and $X_{\mathcal{L}}$ with the product topology, then $X_{\mathcal{L}}$ becomes a Polish space. 

In this paper we will consider countable graphs and countable groups. For countable graphs the language consists of one binary relation symbol, and the space of all countable graphs with universe $\omega$ is 
$$\begin{array}{rl} \mathcal{P}=\big\{ x\in 2^{\omega^2}\colon & \forall n, m\in\omega\\
& x(n,n)=0 \mbox{ and } (x(n,m)=1\rightarrow x(m,n)=1)\big\}, 
\end{array}$$
which is a closed subspace of $2^{\omega\times\omega}$, and therefore a Polish space.

In the case of countable groups we use the language $\mathcal{L}=\{e, \cdot, {}^{-1}\}$, where $e$ is a constant symbol, $\cdot$ is a binary function symbol, and ${}^{-1}$ is a unary function symbol. For each $x\in X_{\mathcal{L}}$ let $\Gamma_x$ be the $\mathcal{L}$-structure coded by $x$. Let $\mathcal{G}$ be the subset of $x\in X_{\mathcal{L}}$ where $\Gamma_x$ is a group. Then $\mathcal{G}$ is a closed subset of $X_{\mathcal{L}}$ and therefore is itself a Polish space.

Recall that a group is {\em locally finite} if every finitely generated subgroup is finite. Consider the subsets
$$ \mathcal{G}_{\rm lf}=\{ x\in \mathcal{G}\colon \mbox{ $\Gamma_x$ is locally finite}\}, $$
$$ \mathcal{G}_{\rm omni}=\{ x\in \mathcal{G}\colon \mbox{ $\Gamma_x$ is omnigenous}\}, $$
and
$$ \mathcal{G}_{\rm univ}=\{x\in \mathcal{G}\colon \mbox{ $\Gamma_x$ is universal for all finite groups}\}. $$
Then each of these sets is a $G_\delta$ subset of $\mathcal{G}$, and therefore is a Polish space. Moreover, each of them is a set invariant for the isomorphism relation. The primary class of groups we study in this paper is
$$ \mathcal{G}_{\rm uolf}=\mathcal{G}_{\rm lf}\cap \mathcal{G}_{\rm omni}\cap \mathcal{G}_{\rm univ}, $$
which is again an invariant Polish subspace of $\mathcal{G}$. We are interested in the complexity of the isomorphism relation on $\mathcal{G}_{\rm uolf}$.

\subsection{Omnigenous locally finite groups}

We recall some more detailed properties of Hall's universal locally finite group $\mathbb{H}$. 

Hall \cite{Hall} considered the following strong form of {\em ultrahomogeneity} for a group $\Gamma$: Any two isomorphic finite subgroups of $\Gamma$ are conjugate by an element of $\Gamma$. He showed that $\mathbb{H}$ is the unique, up to isomorphism, countable locally finite group that is universal for finite groups and is ultrahomogeneous. 

For any group $G$ we denote the {\em identity} element or the {\it trivial} element by $e_G$. An element $g\in G$ is {\em nontrivial} if $g\neq e_G$. A group $G$ is {\em trivial} if $G=\{e_G\}$; a subgroup $H$ of $G$ is {\em trivial} if $H=\{e_G\}$. Recall that a group $G$ is {\em simple} if its only normal subgroups are $\{e_G\}$ and $G$. Hall \cite{Hall} showed that $\mathbb{H}$ is simple, a fact we will use in our proofs later. 





Although we will not need it in our proofs, we present a new characterization of omnigenous locally finite groups below. We first recall some basic group-theoretic notions. Recall that the \textit{index} of a subgroup $H$ of a group $G$ is the number of cosets of $H$ in $G$, or equivalently, the number of classes of the equivalence relation defined on $G$ by $$g\sim h \iff g^{-1}h\in H.$$ A group $G$ is {\em residually finite} if for any nontrivial $g\in G$ there is a normal subgroup $N \unlhd\, G$ of finite index such that $g\not\in N$. Finite groups and finitely generated free groups are residually finite (see \cite[Section 2]{Ma}).
We also recall the definition of the free product of groups. Given groups $G$ and $H$, a \textit{word} is a finite sequence of nontrivial elements of $G$ and $H$ (the sequence could be empty) and a word $w$ is \textit{reduced} if either it is an empty word or $w$ is nonempty and there are no adjacent elements in $w$ which are from the same group. If $w$ is a word, the reduction of $w$, denoted by $w^*$, is the unique reduced word obtained by successively replacing any occurence of consecutive elements from the same group by their product, if the product is nontrivial, and eliminating them if the product is trivial. The \textit{free product} of $G$ and $H$, denoted by $G*H$, is the group whose elements are the reduced words of $G$ and $H$, and the identity element is the empty word, with the group operation defined by $w_1\cdot w_2=(w_1w_2)^*$, where $w_1w_2$ is the concatenation of $w_1$ and $w_2$. 

\begin{lemma}\label{lem:wo} 
    Let $F, G$ be finite groups and let $\psi\colon F\to G$ be a homomorphism from $F$ into $G$. Then there exist a finite group $H$, an embedding $i\colon F\to H$, and a surjective homomorphism $\Psi\colon H\to G$ such that $\psi=\Psi\circ i$.
\end{lemma}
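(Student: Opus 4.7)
The plan is to take $H$ to be the direct product $F \times G$ and embed $F$ into $H$ as the graph of $\psi$. Concretely, I would define $i\colon F \to F \times G$ by $i(f) = (f, \psi(f))$, and let $\Psi\colon F\times G \to G$ be projection onto the second coordinate.

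The verification is then routine. First, $i$ is a group homomorphism because $\psi$ is one, and $i$ is injective because composing with projection onto the first factor recovers the identity on $F$. Second, $\Psi$ is a surjective homomorphism as a coordinate projection (surjective because $\Psi(e_F, g) = g$ for every $g \in G$). Third, $\Psi\circ i = \psi$ by direct computation: $\Psi(i(f)) = \Psi(f, \psi(f)) = \psi(f)$. Finally, $|H| = |F|\cdot |G| < \infty$, so $H$ is a finite group.

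There is no real obstacle here; every step is essentially one line. Notably, the construction does not invoke the free product $F \ast G$ or the residual finiteness of free products of finite groups recalled in the paragraphs just before the lemma. I suspect that material is in service of the \emph{new characterization} of omnigenous locally finite groups the authors announce immediately before the lemma statement, rather than of this particular lemma. An alternative, more elaborate proof along those lines would proceed as follows: consider the canonical homomorphism $\pi\colon F\ast G \to G$ which is $\psi$ on $F$ and the identity on $G$; using residual finiteness of $F\ast G$, choose a finite-index normal subgroup $M \trianglelefteq F\ast G$ with $M\cap F = \{e\}$; then the subgroup $K = M \cap \ker\pi$ is normal of finite index, contained in $\ker\pi$, and meets $F$ trivially, so $H = (F\ast G)/K$ with the induced maps works. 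This alternative would be useful if one wanted $H$ to surject onto $G$ with some additional structural constraint, but for the statement as given, the direct-product construction is cleaner and completely sufficient.
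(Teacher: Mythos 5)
Your proof is correct, and it is genuinely different from --- and considerably more elementary than --- the one in the paper. The paper proves this lemma by forming the free product $F \ast F_n$ (where $F_n$ is a free group on generators mapping to elements $g_1,\dots,g_n$ that generate $G$ over $\psi(F)$), invoking Gruenberg's theorem that free products of residually finite groups are residually finite, and quotienting by a finite-index normal subgroup that meets $F$ trivially and is contained in the kernel of the induced map onto $G$; your sketched ``alternative'' is essentially that argument. Your graph-of-$\psi$ construction $i(f)=(f,\psi(f))$ into $H=F\times G$ with $\Psi$ the second projection verifies every clause of the statement in one line each, so for the lemma as literally stated it is completely sufficient. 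The one thing the paper's construction buys is that its $H$ is generated by $i(F)$ together with preimages of the chosen $g_j$'s, a property quietly used in the subsequent Proposition (``without loss of generality, we may assume $K=\langle i(H_0),k\rangle$''); but this is not lost with your construction either, since one may always replace $F\times G$ by the subgroup generated by $i(F)$ and the elements $(e_F,g_j)$, onto which $\Psi$ still surjects because its image contains $\psi(F)$ and all the $g_j$. So your proof not only establishes the lemma but also supports its downstream use; the free-product machinery in the paper is, strictly speaking, avoidable here.
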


\begin{proof} 
    Let $g_1, \dots, g_n\in G\setminus \psi(F)$ be such that $G=\langle \psi(F), g_1, \dots, g_n\rangle$. Consider the free group $F_n=\langle x_1, \dots, x_n\rangle$. Define $\phi: F*F_n\to G$ by $\phi(f)=\psi(f)$ for $f\in F$ and $\phi(x_i)=g_i$ for $1\leq i\leq n$. Then $\phi$ can be extended to a surjective group homomorphism from $F*F_n$ to $G$. Note that $\ker \phi$ is of finite index in $F*F_n$, since $G$ is finite.

    By a classical theorem of Gruenberg (see \cite[Theorem 4.1]{Gruenberg} or \cite{Berlai}), the free product of residually finite groups is residually finite. Thus $\Gamma=F*F_n$ is residually finite. Note that we can naturally view $F$ as a subgroup of $F*F_n$. Thus for each nontrivial $g\in F$, there is a normal subgroup $N_g$ of finite index in $F*F_n$ with $g\notin N_g$. Let $N=\bigcap_{g\in F} N_g \cap \ker\phi$. Then $N\cap F=\{e_{\Gamma}\}$ and $N\leq \ker\phi$. Since $F$ is finite, $N$ is of finite index. Let $H=(F*F_n)/N$. Define $i: F\to H$ by $i(f)=fN$. Then $i$ is a homomorphism. To see that $i$ is an embedding, suppose $fN=f'N$ with $f, f'\in F$. Then $f^{-1}f'\in N\cap F=\{e_\Gamma\}$, so $f=f'$.

    Define $\Psi\colon H\to G$ by $\Psi(hN)=\phi(h)$. We note that $\Psi$ is well-defined. For this, suppose $hN=h'N$. Then $h^{-1}h\in N\leq \ker\phi$. Thus $\phi(h^{-1}h')=e_G$, and $\phi(h)=\phi(h')$. It is obvious that $\Psi$ is a surjective homomorphism.

    Finally, we verify that $\psi=\Psi\circ i$. For this, let $f\in F$, and note $\Psi\circ i(f)=\Psi(fN)=\phi(f)=\psi(f)$.
\end{proof}

\begin{proposition} Let $\Gamma$ be a locally finite group. Then $\Gamma$ is omnigenous iff for any finite subgroup $F\leq \Gamma$, finite group $G$, and embedding $\psi\colon  F\to G$, assuming that there is $g\in G$ such that $G=\langle \psi(F), g\rangle$, then there is a finite $H\leq \Gamma$ with $F\leq H$ and a surjective homomorphism $\Psi\colon H\to G$ such that $\Psi\upharpoonright F=\psi$.
\end{proposition}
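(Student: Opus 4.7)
The forward direction is essentially free: applying the definition of omnigenousness with $\varphi\colon F\hookrightarrow\Gamma$ the inclusion and $i=\psi$ (both embeddings) yields a finite $H\leq\Gamma$ containing $F$ and a surjection $\Psi\colon H\to G$ with $\Psi\circ\varphi=\psi$, i.e.\ $\Psi\upharpoonright F=\psi$. The single-generator hypothesis $G=\langle\psi(F),g\rangle$ plays no role in this direction; the content of the proposition is precisely that this weaker condition in the converse still suffices.

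For the reverse direction, the plan is to iterate the one-generator extension property along a chain of subgroups. Given arbitrary finite $F, G$ with embeddings $\varphi\colon F\to\Gamma$ and $i\colon F\to G$, I would first replace $F$ by $\varphi(F)$ so that $F\leq\Gamma$ and $\varphi$ is the inclusion. Choose $g_1,\dots,g_n\in G$ with $G=\langle i(F), g_1,\dots,g_n\rangle$ and set $G_k=\langle i(F),g_1,\dots,g_k\rangle$. I would then build by induction on $k$ a finite $H_k\leq\Gamma$ with $F\leq H_k$ and a surjection $\Psi_k\colon H_k\to G_k$ with $\Psi_k\upharpoonright F=i$. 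The base case $H_0=F$, $\Psi_0=i$ is immediate.

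The inductive step is where the hypothesis will be used and is the main obstacle. Given $\Psi_k\colon H_k\twoheadrightarrow G_k\leq G_{k+1}$, the one-generator hypothesis cannot be applied directly, since there is no natural embedding of $H_k$ into any target generated over $H_k$ by a single element and surjecting onto $G_{k+1}$. To bridge this, I plan to first construct an auxiliary finite group $\tilde H$ which (i) contains $H_k$ as a subgroup, (ii) is generated by $H_k$ together with one extra element, and (iii) admits a surjection $\tilde\Psi\colon\tilde H\to G_{k+1}$ restricting to $\Psi_k$ on $H_k$. The construction will mimic the proof of Lemma \ref{lem:wo}: form the free product $H_k*\mathbb{Z}$, define a surjection $\Phi\colon H_k*\mathbb{Z}\to G_{k+1}$ by $\Phi\upharpoonright H_k=\Psi_k$ and $\Phi(1)=g_{k+1}$, and invoke Gruenberg's theorem (the free product of residually finite groups is residually finite) to find a finite-index normal subgroup $N\leq\ker\Phi$ with $N\cap H_k=\{e\}$. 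Then $\tilde H=(H_k*\mathbb{Z})/N$ will have properties (i)--(iii).

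With $\tilde H$ in hand, the single-generator extension hypothesis applied to $H_k\leq\Gamma$, the target $\tilde H$, and the inclusion $H_k\hookrightarrow\tilde H$ produces a finite $H_{k+1}\leq\Gamma$ with $H_k\leq H_{k+1}$ together with a surjection $H_{k+1}\twoheadrightarrow\tilde H$ acting as the identity on $H_k$; composing with $\tilde\Psi$ defines $\Psi_{k+1}$ and closes the induction. After $n$ steps, $\Psi_n\colon H_n\to G$ will witness omnigenousness. The entire difficulty is concentrated in the free-product construction of $\tilde H$; the rest is bookkeeping.
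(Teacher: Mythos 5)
Your proposal is correct and follows essentially the same route as the paper: induction on the number of generators needed over $\psi(F)$, with the inductive step bridged by a finite one-generator extension built from a free product and Gruenberg's residual-finiteness theorem. Your inline construction of $\tilde H$ from $H_k*\mathbb{Z}$ is exactly the paper's Lemma~\ref{lem:wo} specialized to a single free generator, which the paper invokes as a black box at the corresponding point.
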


\begin{proof} The stated property is obviously necessary for omnigenousness. To show its sufficiency, let $\Gamma$ have the property. 

Suppose $F\leq \Gamma$ is finite, $G$ is a finite group, and $\psi\colon F\to G$ is an embedding. Suppose $n\geq 1$ is the smallest number such that there are $g_1, \dots, g_n\in G$ such that
$G=\langle \psi(F), g_1, \dots, g_n\rangle$. We prove by induction on $n$ that there exist a finite $H\leq \Gamma$ with $F\leq H$ and a surjective homomorphism $\Psi\colon H\to G$ with $\Psi\upharpoonright F=\psi$. If $n=1$ this is given by the assumption on $\Gamma$. 

As the inductive hypothesis, suppose there exist a finite $H_0\leq \Gamma$ with $F\leq H_0$ and a surjective homomorphism $\Psi_0\colon H_0\to \langle \psi(F), g_1, \dots, g_{n-1}\rangle$ with $\Psi_0\upharpoonright F=\psi$. By Lemma~\ref{lem:wo}, there exist a finite group $K$, an embedding $i\colon H_0\to K$, and a surjective homomorphism $\kappa\colon K \to G$ such that $\Psi_0=\kappa\circ i$. Note that, without loss of generality, we may assume $K=\langle i(H_0), k\rangle$ for any $k$ with $\kappa(k)=g_n$. By the assumption on $\Gamma$ again, there exist $H\leq \Gamma$ with $H_0\leq H$ and a surjective homomorphism $\Phi\colon H\to K$ such that $\Phi\upharpoonright H_0=i$. Let $\Psi=\kappa\circ \Phi$. Then $\Psi\colon H\to G$ is surjective, and $\Psi\upharpoonright F=\Psi_0\upharpoonright F=\psi$.
\end{proof}

\section{More Omnigenouis Locally Finite Groups}\label{sec:3}
In this section, we give a method to construct pairwise non-isomorphic countable omnigenous locally finite groups. 

\subsection{A general construction}

We first consider direct limits of direct systems. By a {\em direct system} we mean a sequence $\{G_i, \varphi_i\colon i\in\omega\}$ where for each $i\in\omega$, $G_i$ is a group and $\varphi_i\colon G_i\to G_{i+1}$ is an embedding. The {\em direct limit} of a direct system is defined as follows. First, define a relation $\sim$ on the disjoint union $X=\bigsqcup_{i\in\omega} G_i$. If $a \in G_i$, $b\in G_j$ and $i<j$, then define
$$a\sim b  \iff b\sim a \iff (\varphi_{j-1}\circ \dots\circ\varphi_i)(a)=b.$$ 
Since each $\varphi_i$ is an embedding, $\sim$ is an equivalence relation. The direct limit will be a group on the set of all $\sim$ classes, i.e., $\{[a]_\sim\colon a\in X\}$. 
To define the group operation, consider $a\in G_i$ and $b\in G_j$ and suppose $i\leq j$. Define 
$$[a]_\sim \cdot [b]_\sim = [(\varphi_{j-1}\circ \dots\circ \varphi_i)(a)\cdot b]_\sim.$$ It is routine to check that $\cdot$ is welldefined and that $[a]_\sim^{-1}=[a^{-1}]_\sim$. In the rest of this paper, when there is no danger of confusion, we will view each embedding as inclusion. Thus for all $i$, $G_i\leq G_{i+1}$ and the direct limit is just the union of all $G_i$. For $a\in G_i$, we will also simply denote $[a]_\sim$ as $a$ and view it as an element of each $G_j$ when $j\geq i$. 

Now let $A, H$ be groups with $H$ being a non-abelian simple group. Then $H$ has trivial center. Let $f:A\oplus H \to H$ be a group embedding. Define a group embedding $\varphi: A\oplus H \to A\oplus H$ by 
$$\varphi(s,t)=(s,f(s,t)) $$
for $s\in A$ and $t\in H$. Consider a direct system $\{G_i,\varphi_i\colon i\in\omega\}$ where each $G_i$ is an isomorphic copy of $A\oplus H$ and $\varphi_i$ is the above $\varphi$. Let $D(A,H,f)$ be the direct limit of this system. We will always use the same $H$ and so will just denote it by $D(A,f)$. 

There is a special subgroup of $D(A,f)$ which we denote by $S(A,f)$. Consider the direct system $\{H_i, \psi_i\colon i\in\omega\}$ where for each $i\in\omega$, 
$$H_i= H\leq G_i \mbox{ and }\psi_i=\varphi_i\upharpoonright H_i.$$
Let $S(A,f)$ be the direct limit of this system. Then $S(A,f)$ is special in the following sense. 

\begin{lemma}\label{special subgroup}
    $S(A,f)$ is a simple normal subgroup of $D(A,f)$. Every nontrivial normal subgroup of $D(A,f)$ contains $S(A,f)$ as a normal subgroup. Thus $S(A,f)$ is the only nontrivial simple normal subgroup of $D(A,f)$. 
\end{lemma}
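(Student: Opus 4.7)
The plan is to prove the three assertions in order. For the simplicity of $S(A,f)$, I would observe that $S(A,f)$ is the ascending union of copies $H_0 \leq H_1 \leq H_2 \leq \dots$ of the simple group $H$; if $N \unlhd S(A,f)$ is nontrivial with a witness $n \in H_i$, then for each $j \geq i$ the subgroup $N \cap H_j$ is a nontrivial normal subgroup of $H_j$, hence equals $H_j$ by simplicity of $H$, and taking the union gives $N = S(A,f)$. Normality of $S(A,f)$ in $D(A,f)$ is then a one-line direct-sum computation: representing both elements at a common level, one has $(s,t)(e_A,h)(s,t)^{-1} = (e_A, tht^{-1}) \in H_i$.

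The main content, and the expected main obstacle, is to show that every nontrivial normal subgroup $N \unlhd D(A,f)$ satisfies $N \cap S(A,f) \neq \{e\}$; this immediately yields $S(A,f) \subseteq N$ via the simplicity of $S(A,f)$ together with the normality of $N \cap S(A,f)$ inside $S(A,f)$. The strategy is, given a nontrivial $n \in N$ represented at level $i$ as $n = (s, t)$, to compute the commutator $n^{-1}(e_A,h)\, n\, (e_A,h)^{-1}$; a short calculation gives $(e_A, t^{-1} h t h^{-1})$, which lies in $H_i \leq S(A,f)$. When $t \neq e_H$, triviality of the center of the nonabelian simple $H$ furnishes some $h$ making this commutator nontrivial, and we are done.

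The delicate case is $t = e_H$ with $s \neq e_A$, because at level $i$ the commutator above collapses to the identity. I would handle this by re-representing $n$ at level $i+1$, where it becomes $(s, f(s, e_H))$; because $f$ is an embedding and $(s, e_H) \neq (e_A, e_H)$, the new $H$-component is nontrivial, so the commutator argument from the previous paragraph applies at level $i+1$. The uniqueness assertion then drops out: any nontrivial simple normal subgroup $N$ must contain $S(A,f)$ by the containment just proved, and $S(A,f)$ is in turn a nontrivial normal subgroup of the simple $N$, forcing $N = S(A,f)$.
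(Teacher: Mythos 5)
Your proposal is correct and follows essentially the same route as the paper: simplicity of $S(A,f)$ via the ascending union of copies of the simple group $H$, reduction to the case of a nontrivial $H$-coordinate by passing to level $i+1$ through the embedding $f$, and a commutator with an element of $\{e_A\}\times H$ (using the trivial center of $H$) to produce a nontrivial element of $N\cap S(A,f)$.
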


\begin{proof}
    It is easy to see that $S(A,f)$ is a normal subgroup. To see that it is simple, it suffices to show that the normal subgroup generated by any nontrivial element of $S(A,f)$ must be the whole group. For this, fix a nontrivial $t \in H_i$. Let $N_t$ be the normal subgroup generated by $t$ in $S(A,f)$. Then $N_t \cap H_j \ni t$ is a nontrivial normal subgroup of $H_j$ for all $j\geq i$. Since we have assumed that $H_j=H$ is simple, $N_t \cap H_j =H_j$ and thus $N_t = S(A,f)$. This shows that $S(A,f)$ is simple. 

    Now assume $G$ is a nontrivial normal subgroup of $D(A,f)$. Let $(e_A,e_H)\neq (s,t)\in G\cap G_i$. We may assume that $t\neq e_H$ since otherwise we can do the same argument to $\varphi_i(s,t)=(s,f(s,t)) \in G\cap G_{i+1}$. The group $H$ has trivial center. Thus we can choose $t'\in H_i$ such that $t$ and $t'$ do not commute. Then $(s,t)\neq (s,t't{t'}^{-1})=(e_A,t')(s,t)(e_A,t'^{-1})\in G\cap G_i$. Let $a=t'tt'^{-1}t^{-1}$. Then $a\in H_i$ is nontrivial and $(e_A,a)\in G\cap H_i$ (identify $H_i$ with $\{e_A\}\times H_i$). What we have shown so far is that if $G\cap G_i$ is nontrivial, then for all $j>i$, $G\cap H_j$ is nontrivial. Now by the argument in the preceding paragraph, the normal subgroup of $D(A, f)$ generated by any nontrivial element of $H_j$ is $S(A, f)$. Since $G$ is normal, it follows that $G$ contains $S(A, f)$.


    The last assertion  follows immediately. 
\end{proof}

We note that it is possible to characterize the isomorphism type of $A$ from that of $D(A,f)$. 

\begin{lemma}\label{back to A} For any group $A$ we have $A \cong D(A,f)/S(A, f)$. Consequently, if $D(A,f)\cong D(B,g)$ (it is not necessary that the group $H$ used in their constructions are the same), then $A\cong B$.  
\end{lemma}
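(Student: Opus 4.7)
The plan is to exhibit $S(A,f)$ as the kernel of a natural surjection $D(A,f) \to A$, and then use \autoref{special subgroup} to show that any isomorphism $D(A,f) \cong D(B,g)$ must match $S(A,f)$ with $S(B,g)$ and therefore descend to the quotients.

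For the first part, I would construct the projection levelwise. For each $i$, let $\pi_i \colon G_i = A \oplus H_i \to A$ be the coordinate projection $(s,t) \mapsto s$. Because $\varphi_i(s,t) = (s, f(s,t))$ leaves the first coordinate untouched, we have $\pi_{i+1} \circ \varphi_i = \pi_i$, so the $\pi_i$ are compatible with the direct system and induce a (clearly surjective) homomorphism $\pi \colon D(A,f) \to A$. An element $[(s,t)]_\sim$ with $(s,t) \in G_i$ lies in $\ker \pi$ precisely when $s = e_A$, i.e., when the element already lies in $H_i = \{e_A\} \times H_i$. Hence $\ker\pi = \bigcup_i H_i = S(A,f)$, and the first isomorphism theorem yields $A \cong D(A,f)/S(A,f)$.

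For the consequence, suppose $\Phi \colon D(A,f) \to D(B,g)$ is an isomorphism, where possibly different non-abelian simple groups are used to build the two direct systems. The key point is that \autoref{special subgroup} characterizes $S(A,f)$ and $S(B,g)$ purely in terms of the abstract group structure of $D(A,f)$ and $D(B,g)$ respectively, namely as the unique nontrivial simple normal subgroup. Since $\Phi$ maps normal subgroups to normal subgroups and preserves simplicity, and since both $S(A,f)$ and $S(B,g)$ are nontrivial (each contains an isomorphic copy of the chosen $H$), we must have $\Phi(S(A,f)) = S(B,g)$. Therefore $\Phi$ descends to an isomorphism of quotients, and combined with the first part gives $A \cong D(A,f)/S(A,f) \cong D(B,g)/S(B,g) \cong B$.

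I do not foresee a significant obstacle: the only step with real content is the invariant identification of $S(A,f)$ inside $D(A,f)$, and that is exactly what \autoref{special subgroup} has already supplied. The remaining verifications (well-definedness of $\pi$ on the direct limit, identification of $\ker\pi$, and descent of $\Phi$ to the quotients) are routine.
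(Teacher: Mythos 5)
Your proposal is correct and follows essentially the same route as the paper: the coordinate projection $\pi(s,t)=s$ is well defined on the direct limit because each $\varphi_i$ fixes the first coordinate, its kernel is exactly $S(A,f)$, and the first isomorphism theorem gives $A\cong D(A,f)/S(A,f)$. The paper leaves the ``consequently'' part implicit, and your explicit justification via Lemma~\ref{special subgroup} (that $S(A,f)$ is the unique nontrivial simple normal subgroup, hence preserved by any isomorphism) is precisely the intended argument.
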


\begin{proof}
    For any $g=(s,t)\in G_i=A\oplus H$, define $\pi(g)=s$. Then $\pi$ is well defined since $\varphi_i$ does not change the first coordinate. $\pi$ is obviously a homomorphism from $D(A,f)$ onto $A$. 
    Note that $\pi(g)=e_A$ iff $ g=(e_A,t)\in G_i=A\oplus H$ for some $i\in\omega$ and $t\in H$ iff $g\in S(A,f)$. Thus the kernel of $\pi$ is $S(A,f)$. By the isomorphism theorem, $A\cong D(A,f)/S(A,f)$.  
\end{proof}

\subsection{Some properties of the construction}
In this subsection we give more details and properties of the groups that will be used in the proof of Theorem \ref*{main theorem}.

\begin{proposition}\label{omnigeneity}
    Let $A$ be a locally finite group, let $H=\mathbb{H}$, and let $f\colon A\oplus \mathbb{H}\to \mathbb{H}$ be an embedding. Then $D(A,f)=D(A, H, f)$ is omnigenous. 
\end{proposition}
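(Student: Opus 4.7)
The plan is to verify the definition of omnigenousness directly. Given finite groups $F,G$ and embeddings $\varphi\colon F\to D(A,f)$ and $i\colon F\to G$, the task is to produce a finite subgroup $H\leq D(A,f)$ containing $\varphi(F)$ together with a surjective homomorphism $\Psi\colon H\to G$ such that $\Psi\circ\varphi=i$.

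Since $\varphi(F)$ is finite, it lies inside some $G_k$, and the first key step is to pass up one level and work inside $G_{k+1}=A\oplus\mathbb{H}$: for each $x\in F$ write $\varphi(x)=(s_x,t_x)$ in its $G_{k+1}$-coordinates. The gain from this shift is a separation property in the second coordinate: if $(s_x,\tau_x)$ denotes the $G_k$-representative, then $t_x=f(s_x,\tau_x)$, and since $f$ is an embedding (hence injective) we obtain the equivalence $t_x=e_{\mathbb{H}}$ iff $(s_x,\tau_x)=(e_A,e_{\mathbb{H}})$ iff $x=e_F$. Consequently the assignment $i(x)\mapsto t_x$ is a well-defined homomorphism $i(F)\to\mathbb{H}$ with trivial kernel, hence an embedding.

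Next, invoke the weak homogeneity of $\mathbb{H}$ to extend this embedding to an embedding $\iota\colon G\hookrightarrow\mathbb{H}$ with $\iota(i(x))=t_x$ for every $x\in F$. Setting $A_0=\{s_x:x\in F\}$, a finite subgroup of $A$, define
\[ H\ =\ A_0\oplus \iota(G)\ \leq\ A\oplus\mathbb{H}\ =\ G_{k+1}\ \subseteq\ D(A,f). \]
Then $H$ is a finite subgroup of $D(A,f)$; it contains $\varphi(F)$, since $\varphi(x)=(s_x,\iota(i(x)))\in A_0\oplus \iota(G)$; and the map $\Psi\colon H\to G$ given by $\Psi(a,h)=\iota^{-1}(h)$ is a surjective homomorphism with $\Psi\circ\varphi=i$, as required.

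The step I expect to be the main obstacle is recognizing the need to go from $G_k$ up to $G_{k+1}$. Read off inside $G_k$, the $\mathbb{H}$-coordinates of elements of $\varphi(F)$ need not separate elements of $F$---distinct $x,y$ could share the same second coordinate while being distinguished only by their first coordinates---so the analogous map $i(x)\mapsto \tau_x$ can fail to be injective, and weak homogeneity of $\mathbb{H}$ cannot be directly invoked. The injectivity of $f\colon A\oplus\mathbb{H}\to\mathbb{H}$ is exactly what pushes the necessary separation into the $\mathbb{H}$-coordinate one level up, after which the proposition reduces to a clean application of the weak homogeneity of Hall's group.
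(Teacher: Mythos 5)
Your proof is correct and follows essentially the same route as the paper: pass to the next level of the direct system so that the $\mathbb{H}$-coordinate (equivalently, $f\circ\varphi$) gives an embedding of $F$ into $\mathbb{H}$, apply the weak homogeneity of $\mathbb{H}$ to realize $G$ above it, and take the projection onto the $\mathbb{H}$-coordinate as the surjection. Your subgroup $A_0\oplus\iota(G)$ coincides with the paper's finitely generated subgroup $K'$ of $G_{k+1}$, so the two arguments differ only in presentation.
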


\begin{proof}
    Let $\Gamma_1\leq \Gamma_2$ be finite groups and $g:\Gamma_1\to D(A,f)$ be a group embedding. Since $\Gamma_1$ is finite, there is $i\in\omega$ such that $g(\Gamma_1)\subseteq G_i= A\oplus \mathbb{H}$. We should emphasize here that in $D(A,f)$, $g\in G_i$ is identified with $\varphi_i(g) \in G_{i+1}$. Then $f\circ g$ is an embedding from $\Gamma_1$ to $\mathbb{H}$. 
    By the weak homogeneity of $\mathbb{H}$, there exist a finite subgroup $K$ of $\mathbb{H}$ and a group isomorphism $h\colon K \to \Gamma_2$ so that $(f\circ g)(\Gamma_1)\leq K\leq \mathbb{H}$ and for all $\gamma\in \Gamma_1$, $(h\circ f\circ g)(\gamma)=\gamma$. Let $K'$ be the subgroup of $G_{i+1}$ generated by $(\varphi_i\circ g)(\Gamma_1)\cup (\{e_A\}\times K)$. Then since in $D(A,f)$, $g(\Gamma_1)$ is identified with $(\varphi_i\circ g)(\Gamma_1)$, $g(\Gamma_1)$ is a subgroup of $K'$. Since $A$ is locally finite and $K'$ is finitely generated, $K'$ is finite. 
    Let $\Phi$ canonically map $(s,t)\in G_{i+1}$ to $t\in \mathbb{H}$. Then it is easy to verify that $\Phi(K')=K$ and thus $h\circ \Phi$ is a homomorphism from $K'$ onto $\Gamma_2$. Now for any $\gamma\in \Gamma_1$, we have
    $$(h\circ \Phi\circ g)(\gamma)=(h\circ \Phi\circ \varphi_i\circ g)(\gamma)=(h\circ f\circ g)(\gamma)=\gamma.$$ Thus $D(A,f)$ is omnigenous. 
\end{proof}

From now on, we take all $H$ in our constructions to be Hall's group $\mathbb{H}$. We will use the following theorem of Paolini and Shelah \cite[Theorem 4]{PS} regarding the automorphism group of $\mathbb{H}$. 

\begin{proposition}[Paolini--Shelah \cite{PS}]\label{PSlemma}
     For every countable locally finite group $G$ there exists $G \cong G' \leq \mathbb{H}$ such that every $\phi \in \aut(G')$ extends to a $\hat{\phi} \in \aut(\mathbb{H})$ in such a way that $\phi \mapsto \hat{\phi}$ embeds $\aut(G')$ into $\aut(\mathbb{H})$.
\end{proposition}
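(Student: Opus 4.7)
The plan is to build $G'$ as a specific, ``saturated'' copy of $G$ inside $\mathbb{H}$, then to extend each $\phi\in\aut(G')$ to $\hat\phi\in\aut(\mathbb{H})$ by a back-and-forth driven by $\phi$, and finally to verify that this assignment is a group embedding.

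First, I would fix an exhaustion $G'=\bigcup_{n\in\omega} G'_n$ of $G'$ by finite subgroups and arrange, via a Fra\"iss\'e-style construction inside $\mathbb{H}$, an exhaustion $\mathbb{H}=\bigcup_n K_n$ by finite subgroups with $G'_n\leq K_n\leq K_{n+1}$, satisfying the following richness condition: for every $n$, every $\psi\in\aut(G'_n)$, and every finite group $F$ with $G'_n\leq F$, there exists $m>n$ and an embedding $j\colon F\to K_m$ whose restriction to $G'_n$ equals $\psi$. The existence of such a chain is obtained by interleaving countably many amalgamation tasks and invoking at each step the weak homogeneity of $\mathbb{H}$ (property (ii) in the introduction), so that every potential ``twist'' of every finite subgroup is preabsorbed into the chain.

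Next, I fix $\phi\in\aut(G')$ and define $\hat\phi$ by a back-and-forth on the $K_n$. At stage $n$ I would have produced an isomorphism $\hat\phi_n\colon K_n\to K'_n$ onto a finite subgroup of $\mathbb{H}$ with $\hat\phi_n\upharpoonright G'_n = \phi\upharpoonright G'_n$ and $\hat\phi_n\supseteq \hat\phi_{n-1}$. The richness of $(K_n)$ furnishes inside some $K_m$ a copy of $K_n$ twisted by $\phi\upharpoonright G'_n$; the ultrahomogeneity of $\mathbb{H}$ then realises this copy as the image of a genuine automorphism $\hat\phi_n$ of $\mathbb{H}$. Standard back-and-forth bookkeeping, alternating between domain and range and enumerating $\mathbb{H}$, yields $\hat\phi=\bigcup_n \hat\phi_n\in\aut(\mathbb{H})$ extending $\phi$. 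To see that $\phi\mapsto\hat\phi$ is a group homomorphism, I would arrange the back-and-forth so that $\hat\phi$ is uniquely determined by $\phi$ together with a single auxiliary enumeration of $\mathbb{H}$ chosen once and for all; uniqueness forces $\widehat{\id_{G'}}=\id_{\mathbb{H}}$ and $\widehat{\phi\circ\psi}=\hat\phi\circ\hat\psi$, since both sides satisfy the characterising recursion. Injectivity is immediate from $\hat\phi\upharpoonright G'=\phi$.

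The chief obstacle is precisely this uniqueness, and hence the multiplicativity $\widehat{\phi\circ\psi}=\hat\phi\circ\hat\psi$. Extending $\phi$ by raw ultrahomogeneity only yields \emph{some} extension, and independent choices for different $\phi$'s will typically destroy the homomorphism property. Pinning down a canonical extension requires $G'$ to be embedded in $\mathbb{H}$ rigidly enough that the pointwise stabiliser of $G'$ in $\aut(\mathbb{H})$ acts trivially on the skeleton guiding the back-and-forth; engineering such a rigid copy, and verifying that the induced section $\aut(G')\to\aut(\mathbb{H})$ is defined uniformly in $\phi$, is the substantive content of the Paolini--Shelah argument and is not accessible by a purely soft Fra\"iss\'e-theoretic appeal.
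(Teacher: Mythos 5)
First, a point of comparison: the paper does not prove this proposition at all --- it is quoted verbatim from Paolini and Shelah \cite[Theorem 4]{PS} and used as a black box --- so there is no internal proof to measure your attempt against, only the cited one. Judged on its own terms, your proposal has a genuine gap, and you have in fact located it yourself in your final paragraph. The back-and-forth you describe does produce, for each $\phi\in\aut(G')$, \emph{some} $\hat\phi\in\aut(\mathbb{H})$ extending it; existence of such an extension follows from the ultrahomogeneity of $\mathbb{H}$ together with the amalgamation property for finite groups, and that part of the sketch is sound. But the proposition also asserts that $\phi\mapsto\hat\phi$ can be chosen to be a group embedding, and your argument for multiplicativity --- ``uniqueness forces $\widehat{\phi\circ\psi}=\hat\phi\circ\hat\psi$, since both sides satisfy the characterising recursion'' --- does not hold up, because your recursion is not a characterisation. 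At each stage it invokes ``the'' embedding furnished by richness and ``the'' automorphism furnished by ultrahomogeneity, and these are choices, not canonical outputs; the pointwise stabiliser in $\aut(\mathbb{H})$ of a finite subgroup is enormous, so different admissible choices genuinely yield different $\hat\phi$, and there is no reason the composite of the chosen extensions of $\phi$ and $\psi$ should coincide with the chosen extension of $\phi\circ\psi$. Fixing ``a single auxiliary enumeration of $\mathbb{H}$'' does not repair this, since the enumeration does not single out which of the many extensions compatible with it is to be taken at each step.

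The missing ingredient is a \emph{canonical}, choice-free extension mechanism at each finite stage. The route Paolini and Shelah take is in this spirit: one realises the relevant groups as unions of chains built by a functorial operation on finite groups (of Cayley type, $K\mapsto \mathrm{Sym}(K)$ with the regular embedding, as in Hall's original construction of $\mathbb{H}$), for which every automorphism of $K$ extends to the overgroup in a way compatible with composition; multiplicativity of $\phi\mapsto\hat\phi$ is then automatic because the extension is given by a functor rather than by an appeal to homogeneity. So your closing diagnosis is accurate --- the statement is not obtainable by a soft Fra\"iss\'e-theoretic argument --- but that diagnosis is an acknowledgement that the proof is incomplete, not a completion of it.
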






What we will do is basically the following. Let $G$ be a locally finite group and let $A\leq G$ be a subgroup. We will choose a group embedding $f$ from $G\oplus\mathbb{H}$ into $\mathbb{H}$ by Proposition \ref{PSlemma} so that every automorphism of $f(G\oplus \mathbb{H})$ can be extended to an automorphism of $\mathbb{H}$. With this embedding, we can do the construction to form $D(G,f)$. Then $f$ will be naturally restricted on $A\oplus \mathbb{H}$ to form $D(A,f)=D(A,f\upharpoonright A\oplus \mathbb{H})$. Then $D(A,f)$ will be a subgroup of $D(G,f)$. 
The special property of $f$ helps us keep the isomorphism type of subgroup of $G$ in the following way. 

\begin{lemma}\label{automorphism}
    Let $G,f$ be as above. Let $A,B\leq G$. Suppose there is an automorphism of $G$ mapping $A$ onto $B$. Then there is an automorphism of $D(G,f)$ mapping $D(A,f)$ onto $D(B,f)$. 
\end{lemma}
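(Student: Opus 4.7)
The plan is to construct $\Phi$ level by level on the direct system $\{G_i, \varphi_i\}_{i\in\omega}$ defining $D(G,f)$, where each $G_i = G \oplus \mathbb{H}$ and $\varphi_i(s, t) = (s, f(s,t))$. We will define, for each $i \in \omega$, an automorphism $\Phi_i$ of $G_i$ of the form $\Phi_i(s, t) = (\alpha(s), \sigma_i(t))$ for a suitably chosen automorphism $\sigma_i$ of $\mathbb{H}$, arranged so that $\Phi_{i+1} \circ \varphi_i = \varphi_i \circ \Phi_i$. The $\Phi_i$'s will then glue together to give an automorphism $\Phi$ of $D(G,f)$.

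Unpacking the compatibility condition gives
\[ \sigma_{i+1}(f(s,t)) = f(\alpha(s), \sigma_i(t)) \]
for all $(s,t)\in G\oplus \mathbb{H}$, equivalently $\sigma_{i+1}\upharpoonright f(G \oplus \mathbb{H}) = f\circ(\alpha\oplus\sigma_i)\circ f^{-1}$. We set $\sigma_0 = \id_{\mathbb{H}}$ and inductively choose $\sigma_{i+1}$ as follows. The prescribed restriction $f\circ(\alpha\oplus\sigma_i)\circ f^{-1}$ is an automorphism of the subgroup $f(G \oplus \mathbb{H}) \leq \mathbb{H}$, since it is the $f$-conjugate of the automorphism $\alpha \oplus \sigma_i$ of $G \oplus \mathbb{H}$. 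By the defining property of $f$ coming from Proposition~\ref{PSlemma}, this automorphism of $f(G \oplus \mathbb{H})$ extends to an automorphism of $\mathbb{H}$, which we take to be $\sigma_{i+1}$. This inductive step is the heart of the argument and is the only place where the extension property of $f$ is invoked.

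Once the $\sigma_i$'s are fixed, it is routine to check that each $\Phi_i$ is a group automorphism of $G_i$ and that the compatibility relation holds, so the $\Phi_i$'s induce a well-defined automorphism $\Phi$ of $D(G,f)$. Since $\alpha(A) = B$ and each $\sigma_i$ is a bijection of $\mathbb{H}$, each $\Phi_i$ restricts to an isomorphism from $A \oplus \mathbb{H}$ onto $B \oplus \mathbb{H}$, and these restrictions are themselves compatible with the transition maps $\varphi_i\upharpoonright (A\oplus\mathbb{H})$ and $\varphi_i\upharpoonright(B\oplus\mathbb{H})$ for the subsystems defining $D(A,f)$ and $D(B,f)$. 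Passing to the direct limit yields $\Phi(D(A,f)) = D(B,f)$, as required. The main potential obstacle---the apparent circularity in demanding that $\sigma_{i+1}$ restrict to a prescribed automorphism of $f(G \oplus \mathbb{H})$ that itself depends on the previously constructed $\sigma_i$---is precisely what the special choice of $f$ from Proposition~\ref{PSlemma} was designed to overcome.
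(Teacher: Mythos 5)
Your proposal is correct and follows essentially the same route as the paper: build automorphisms $\Phi_i=\alpha\oplus\sigma_i$ of each level $G\oplus\mathbb{H}$, with $\sigma_0=\id$ and $\sigma_{i+1}$ obtained by extending the automorphism $f\circ(\alpha\oplus\sigma_i)\circ f^{-1}$ of $f(G\oplus\mathbb{H})$ to all of $\mathbb{H}$ via the Paolini--Shelah extension property, then pass to the direct limit. The paper phrases the inductive step through an intermediate extension $\pi'$ of $f\circ(\alpha\oplus\id)\circ f^{-1}$, but the resulting prescription on $f(G\oplus\mathbb{H})$ is identical to yours, so this is only a cosmetic difference.
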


\begin{proof} Let $\pi$ be an automorphism of $G$ mapping $A$ onto $B$.
    First of all, $\pi$ canonically extends to an automorphism of $G\oplus \mathbb{H}$ which we still denote by $\pi$, that is, $\pi(s,t)=(\pi(s),t)$. By the choice of $f$ which uses Proposition~\ref*{PSlemma}, there is an automorphism $\pi'$ of $\mathbb{H}$ such that for all $(s,t)\in G\oplus \mathbb{H}$, $(\pi'\circ f)(s,t)=(f\circ \pi)(s,t)$. Using this, we now define a sequence of automorphisms $h_i$ of $\mathbb{H}$ by induction on $i\in\omega$. 

    Let $h_0=\id$. Suppose $h_i$ has been defined. Then $\pi'\circ f\circ(\id,h_i)\circ f^{-1}$ is an automorphism of $f(G\oplus \mathbb{H})$ and thus extends to an automorphism $h_{i+1}$ of $\mathbb{H}$. This finishes the definition of $h_i$, $i\in\omega$. 
    
We now define an automorphism $\hat{\pi}$ of $D(G,f)$ mapping $D(A,f)$ onto $D(B,f)$. For $x\in D(G,f)$, if $x\in G_i=G\oplus\mathbb{H}$ is of the form $(s,t)$, we let $\hat{\pi}(x)=(\pi(s),h_i(t))\in G_i$. 
    We first check that $\hat{\pi}$ is well defined. For this, suppose $(s,t)\in G_i$ and $(s',t')\in G_{i+1}$ represent the same element of $D(G,f)$. Then $s'=s$ and $t'=f(s,t)$. Then 
    \begin{align*}
        h_{i+1}(t')&=(h_{i+1}\circ f)(s,t)=(\pi'\circ f)(s,h_i(t))\\& =(f\circ\pi)(s,h_i(t))=f(\pi(s),h_i(t)).
    \end{align*}
    Thus $(\pi(s),h_i(t))$ and $(\pi(s'),h_{i+1}(t'))$ represent the same element. 

   It is easy to check that $\hat{\pi}$ is an automorphism of $D(G, f)$. Finally, note that 
    \begin{align*}
        x\in D(A,f) &\iff \exists i\in\omega\  \big(\, x=(s,t)\in G_i \mbox{ and } s \in A\,\big) \\& \iff \exists i\in\omega \ \big(\,\hat{\pi}(x)=(\pi(s),h_i(t))\in G_i \mbox{ and } \pi(s)\in B\,\big)\\& \iff \hat{\pi}(x)\in D(B,f).  
    \end{align*}
    Thus $\hat{\pi}(D(A, f))=D(B, f)$ as promised.
\end{proof}

\section{Proof of the Main Theorem}\label{sec:4}

In this section, we prove the Borel completeness of the class of universal, omnigenous locally finite groups. 

We will use the construction of Mekler \cite{Me}; for a clearer description of Mekler's construction, see A.3 and Exercise 9.2.23 of \cite{Ho}. Mekler associates to each countable graph $G$ a countable group $\Gamma(G)$ which is nilpotent of class $2$ and of exponent $p$, where $p$ is a fixed odd prime. Since every solvable periodic group is locally finite (see e.g. \cite[Proposition 1.1.5]{Dixon}), each $\Gamma(G)$ is locally finite. 
The association map $\Gamma$ sends isomorphic graphs to isomorphic groups, and it turns out that $\Gamma$ is a Borel map from $\mathcal{P}$ to $\mathcal{G}_{\scriptsize\rm lf}$. Moreover, he uses a technical restriction on the graphs so that he can recover the graph $G$ from the group $\Gamma(G)$. 

\begin{definition}
    A graph is called \textit{nice} if it does not contain a triangle or a square, and for any distinct vertices $x,y$, there is a vertex $z$ adjacent to $x$ but not to $y$. 
\end{definition}

The class of all countable nice graphs is easily seen to be a $G_\delta$ subspace of $\mathcal{P}$. The following theorem paraphrases Mekler's results in \cite{Me}.

\begin{theorem}[Mekler \cite{Me}]\label{me}{\ }
\begin{enumerate}
\item[\rm (1)] The class of all countable nice graphs is Borel complete.
\item[\rm (2)] If $G$, $H$ are nice graphs, then $G\cong H$ iff $\Gamma(G)\cong \Gamma(H)$. 
\end{enumerate}
\end{theorem}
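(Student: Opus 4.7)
For Part (1), my plan is to Borel-reduce from the class of all countable graphs, which is Borel complete by Friedman--Stanley. Given an arbitrary countable graph $G = (V, E)$, I would construct a nice graph $\widetilde{G}$ by (i) subdividing each edge of $G$ several times (replacing every edge by a path of length at least $5$, say) so that no short cycles, in particular no triangles or squares, are introduced, and (ii) attaching to each original vertex $v$ a rigid ``fingerprint gadget'' (for instance a tree-like structure keyed to the identity of $v$ via path lengths) whose presence guarantees the third clause of niceness: for any two distinct vertices $x, y$ of $\widetilde{G}$, one can find a vertex $z$ in some fingerprint adjacent to $x$ but not to $y$. The map $G \mapsto \widetilde{G}$ is evidently Borel, and the combinatorial choices can be arranged so that $G \cong H$ iff $\widetilde{G} \cong \widetilde{H}$, since the gadgets make the ``original'' vertices identifiable inside $\widetilde{G}$, and any isomorphism $\widetilde{G} \to \widetilde{H}$ restricts to one between $G$ and $H$.

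For Part (2), the forward direction ($G \cong H \Rightarrow \Gamma(G) \cong \Gamma(H)$) is immediate from the functoriality of Mekler's construction: a bijection of vertex sets preserving adjacency extends canonically to an isomorphism of the corresponding free nilpotent-of-class-$2$, exponent-$p$ groups modulo the commuting relations for non-edges. For the reverse direction, my plan is to follow Mekler's original analysis and recover the vertex set and adjacency relation of $G$ from purely group-theoretic data inside $\Gamma(G)$. The key point is that in a nilpotent group of class $2$ and exponent $p$, commutators $[a,b]$ lie in the center and the commutator map descends to a bilinear pairing on the abelianization, which is an $\mathbb{F}_p$-vector space. The vertex generators $x_v, x_w$ commute in $\Gamma(G)$ iff $v$ and $w$ are non-adjacent, so non-commuting pairs of generators correspond to edges. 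One then isolates a canonical class of ``vertex-like'' elements using centralizer and bilinear-form considerations, and verifies that this canonical class together with its commutation pattern recovers $G$ up to isomorphism.

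The main obstacle is carrying out the group-theoretic recovery of $G$ from $\Gamma(G)$ in Part (2). The vertex generators are not literally distinguished invariants of the group but are picked out only up to an appropriate equivalence, so one must show that distinct vertices yield distinct classes and that non-adjacent pairs cannot be confused with adjacent ones. This is exactly where the niceness conditions enter: the absence of triangles prevents accidental collapse in the centralizer analysis, the absence of squares prevents spurious commutation relations, and the distinguishing vertex property ensures that different vertices are separable by the group structure. Assembling these pieces is the combinatorial-algebraic heart of Mekler's theorem.
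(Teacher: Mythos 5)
The paper does not actually prove this theorem: it is stated as a paraphrase of Mekler's results and is justified entirely by the citation to \cite{Me} (with \cite[A.3]{Ho} as the readable source), so there is no in-paper argument to compare yours against. Your outline is in the right general spirit---part (1) by a Borel reduction from all countable graphs to nice graphs, part (2) by recovering the graph from the commutation structure of $\Gamma(G)$---but taken as a standalone argument it has concrete problems. For part (2) you have the commutation convention inverted: in the construction used here, $\Gamma(G)=\mathbf{2}_{x\in G}C_p/\langle [x,y]\colon xGy\rangle$, so \emph{adjacent} generators commute, and edges correspond to \emph{commuting} pairs of generators, not non-commuting ones. More importantly, the entire content of the hard direction of (2)---isolating the vertex-like elements up to a suitable equivalence and showing that distinct vertices give distinct classes whose commutation pattern reproduces $G$---is exactly the step you defer to ``Mekler's original analysis.'' That is legitimate if citing \cite{Me} is allowed (it is what the paper itself does), but then your sketch establishes nothing beyond the citation.

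For part (1) the proposed gadget would fail the third niceness clause as the paper states it. If $x$ is a leaf of a tree-like fingerprint with unique neighbour $w$, then the only candidate for a vertex adjacent to $x$ is $w$; hence for any other vertex $y$ adjacent to $w$ there is no $z$ adjacent to $x$ but not to $y$. It follows that a nice graph can contain a degree-one vertex only inside a connected component consisting of a single edge, so ``tree-like structures keyed to the identity of $v$ via path lengths'' are ruled out wholesale, and one must also check the separation clause for all pairs involving the degree-two interior vertices of your subdivision paths. A correct reduction (as in \cite[A.3]{Ho}) uses gadgets in which every vertex has enough neighbours to witness separation against every other vertex of the extended graph; this is fixable, but designing and verifying such gadgets is precisely where the work in part (1) lives, and your proposal does not supply it.
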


Thus it follows from Theorem~\ref{me} that the class of all countable locally finite groups is Borel complete. 

Next, we recall the following equivalent definition of the {\em random graph} or the {\em Rado graph} (see e.g. \cite[Theorem 7.4.4]{Ho}).

\begin{definition}\label{def:rg} 
    The {\em random graph} $\mathcal{R}$ is the unique countable graph  with the following property: 
\begin{quote}
   for any disjoint finite subsets $A,B$ of $\mathcal{R}$, there is a vertex $x$ in $\mathcal{R}$ which is adjacent to every vertex in $A$ but not to any in $B$. 
\end{quote}
\end{definition}

Now let $G$ be a countable graph. We define $G'$ to be an extension of $G$. The vertex set of $G'$ is the disjoint union of $G$ and $F^G$, where $F^G$ consists of all finite subsets of $G$. For $x,y\in G'$, we then define $xG'y$ if both of them are in $G$ and $xGy$ or exactly one of them, say $y$, is in $F^G$ and $x\in y$. Given a countable graph $G$, we define 
$G_0=G$, $G_{n+1}=G_n'$ and $\mathcal{R}(G)=\bigcup_n G_n$. 

\begin{lemma}
    For any countable graph $G$, we have $\mathcal{R}(G)\cong \mathcal{R}$. Moreover, if $H$ is also a countable graph and $f\colon G\to H$ is a graph isomorphism, then $f$ extends to an isomorphism of $\mathcal{R}(G)$ to $\mathcal{R}(H)$. 
\end{lemma}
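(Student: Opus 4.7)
The plan is to prove the first assertion by verifying that $\mathcal{R}(G)$ satisfies the characterizing extension property of Definition~\ref{def:rg}, and the second by building the extension inductively, stage by stage.

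For the isomorphism $\mathcal{R}(G)\cong\mathcal{R}$, I would fix disjoint finite subsets $A,B\subseteq\mathcal{R}(G)$ and use the fact that $\mathcal{R}(G)=\bigcup_n G_n$ with $G_n\subseteq G_{n+1}$ to pick $n$ with $A\cup B\subseteq G_n$. The crucial observation is the definition of the edge relation on $G_{n+1}=G_n'$: a vertex of the form $S\in F^{G_n}$ is adjacent in $G_{n+1}$ to precisely those elements of $G_n$ that belong to $S$, and to nothing else. Consequently the vertex $x:=A\in F^{G_n}\subseteq G_{n+1}\subseteq\mathcal{R}(G)$ is adjacent in $\mathcal{R}(G)$ to every vertex in $A$ and to no vertex in $B$. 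This is exactly the property in Definition~\ref{def:rg}, so by the uniqueness of the random graph $\mathcal{R}(G)\cong\mathcal{R}$.

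For the moreover clause, I would inductively construct an isomorphism $f_n\colon G_n\to H_n$ with $f_{n+1}\upharpoonright G_n=f_n$, starting from $f_0=f$. Given $f_n$, define $f_{n+1}$ to agree with $f_n$ on $G_n$ and to send $S\in F^{G_n}$ to $f_n[S]=\{f_n(s)\colon s\in S\}\in F^{H_n}$. Since $f_n$ is a bijection $G_n\to H_n$, the induced map on power sets restricts to a bijection $F^{G_n}\to F^{H_n}$, so $f_{n+1}$ is a bijection $G_{n+1}\to H_{n+1}$. Checking that $f_{n+1}$ preserves and reflects edges splits into three routine cases according to whether the endpoints lie in $G_n$ or in $F^{G_n}$; in the mixed case the equivalence $x\in S\iff f_n(x)\in f_n[S]$ does the work.

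Finally, because $f_{n+1}$ extends $f_n$ for every $n$, the union $\hat f:=\bigcup_n f_n\colon \mathcal{R}(G)\to\mathcal{R}(H)$ is a well-defined bijection that is a graph isomorphism (edges in $\mathcal{R}(G)$ already appear at some finite stage $G_n$, where $f_n$ preserves them). In particular $\hat f$ extends the original $f$. I expect no genuine obstacle here; the only point requiring care is tracking the two clauses of the definition of the edge relation in $G'$ when verifying each $f_n$ is an isomorphism, and keeping the inductive definition on $F^{G_n}$ compatible with the identification that glues the stages into $\mathcal{R}(G)$.
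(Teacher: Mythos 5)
Your proposal is correct and follows essentially the same route as the paper: the first claim is verified by checking the extension property of Definition~\ref{def:rg} using the vertex $A\in F^{G_n}$, and the second by inductively extending $f$ stage by stage via $S\mapsto f_n[S]$ on $F^{G_n}$ and taking the union. No substantive differences.
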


\begin{proof}
    For the first part, we only need to check that $\mathcal{R}(G)$ satisfies the property in Definition~\ref{def:rg}. For this, let $A, B$ be disjoint finite subsets of $\mathcal{R}(G)$. Then there is $n\in\omega$ such that $A,B\subseteq G_n$ and thus $A\in F^{G_n}\subseteq G_{n+1}$. As a vertex in $\mathcal{R}(G)$, $A$ is adjacent to each element of $A$ but not to any of $B$. This shows that $\mathcal{R}(G)\cong \mathcal{R}$.

For the second part, note that if we can construct graph isomorphisms $f_n\colon G_n\to H_n$ such that for all $n\in\omega$, $f_{n+1}$ extends $f_n$, then $\bigcup_n f_n$ is an isomorphism of $\mathcal{R}(G)$ to $\mathcal{R}(H)$. 
    So it suffices to show that any isomorphism $f:G\to H$ can be extended to an isomorphism from $G'$ to $H'$. For this, define $F\colon G'=G\sqcup F^G\to H'=H\sqcup F^H$ by
$$ F(x)=\left\{\begin{array}{ll}
f(x), & \mbox{if $x\in G$,} \\
\{f(y)\colon y\in x\}, & \mbox{if $x\in F^G$.}
\end{array}\right.
$$
It is easy to verify that $F$ is an isomorphism extending $f$. 
\end{proof}

Thus we can view each graph $G$ as an induced subgraph of $\mathcal{R}$. Moreover, by carefully arranging the construction, we can find a copy of every graph $G$ in $\mathcal{R}$ as an induced subgraph in a Borel way. Let $\mbox{SG}(\mathcal{R})$ be the space of all subgraphs of $\mathcal{R}$. $\mbox{SG}(\mathcal{R})$ can be viewed as a closed subspace of $2^{\mathcal{R}}$, and is therefore a Polish space. To summarize the above extension procedure, we have the following theorem.

\begin{theorem}\label{graph extension}
    There is a Borel map $S$ from $\mathcal{P}$ to $\mbox{\rm SG}(\mathcal{R})$ such that for every graph $G\in\mathcal{P}$, $S(G)\cong G$, and for any $G, H\in\mathcal{P}$ and any isomorphism $f\colon S(G)\to S(H)$, there is an automorphism of $\mathcal{R}$ extending $f$. 
\end{theorem}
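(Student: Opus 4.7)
The plan is to realize $\mathcal{R}(G)$ on a fixed countable vertex set $V$ independent of $G$, Borel-uniformly select an isomorphism $\iota_G\colon\mathcal{R}(G)\to\mathcal{R}$ by a back-and-forth, and then set $S(G)=\iota_G(G_0)$, where $G_0\subseteq V$ is the canonical copy of $G$ sitting inside $\mathcal{R}(G)$.

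To make $G\mapsto\mathcal{R}(G)$ Borel, I would first fix the vertex set of $\mathcal{R}(G)$ in advance. Let $V_0=\omega$ and inductively $V_{n+1}=V_n\sqcup[V_n]^{<\omega}$, where $[V_n]^{<\omega}$ denotes a fixed countable set in bijection with the finite subsets of $V_n$, and put $V=\bigcup_n V_n$. For $G\in\mathcal{P}$ equip $V$ with the edge relation $E_G$ that mimics the iterated extension: two vertices in $V_0$ are adjacent iff they are $G$-adjacent, and each new vertex $y\in V_{n+1}\setminus V_n$ is adjacent precisely to those elements of $V_n$ lying in the finite subset of $V_n$ coded by $y$. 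Then $(V,E_G)\cong\mathcal{R}(G)$ and the inclusion $V_0\hookrightarrow V$ realizes $G$ as the induced subgraph $G_0$. Clearly $G\mapsto E_G$ is Borel.

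Next I would construct $\iota_G$ by a uniform back-and-forth. Fix once and for all enumerations of $V$ and of the vertex set of $\mathcal{R}$. At even stage $2n$ take the least $v\in V$ not yet in the partial domain, compute its finite type over the current domain (which previously matched vertices are $E_G$-adjacent to $v$ and which are not), and use the extension property of $\mathcal{R}$ to pick the least vertex of $\mathcal{R}$ outside the current range realizing the corresponding type; add this pair to the partial isomorphism. At odd stages do the reverse. Each stage inspects only finitely many edges of $G$ and of $\mathcal{R}$, so $G\mapsto\iota_G$ is Borel. Define $S(G)=\iota_G(V_0)\in\mbox{\rm SG}(\mathcal{R})$; by construction this is an induced subgraph of $\mathcal{R}$ isomorphic to $G$ via $\iota_G\upharpoonright V_0$.

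Finally, for the extension clause, given any isomorphism $f\colon S(G)\to S(H)$, the composition $\iota_H^{-1}\circ f\circ(\iota_G\upharpoonright V_0)$ is an isomorphism $G\to H$, which by the preceding lemma extends to an isomorphism $\tilde f\colon\mathcal{R}(G)\to\mathcal{R}(H)$; then $\iota_H\circ\tilde f\circ\iota_G^{-1}$ is an automorphism of $\mathcal{R}$ extending $f$. The main delicate point is verifying that the back-and-forth construction of $\iota_G$ genuinely depends Borel-uniformly on $G$, but the minimal-witness conventions reduce this to checking that a single stage is a Borel operation on the codes, which is routine.
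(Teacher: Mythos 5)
Your proposal is correct and follows essentially the same route as the paper: realize $\mathcal{R}(G)$ on a fixed vertex set, identify it with $\mathcal{R}$ in a Borel-uniform way, let $S(G)$ be the image of the canonical copy of $G$, and derive the extension clause from the preceding lemma on $\mathcal{R}(G)\cong\mathcal{R}$ and extension of isomorphisms. The paper merely asserts the Borel uniformity (``by carefully arranging the construction''), whereas you supply the minimal-witness back-and-forth that justifies it; this is a filling-in of detail rather than a genuinely different argument.
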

To continue, we should review the construction of Mekler. Fix an odd prime and a graph $G$. Define $\Gamma(G)$ as follows. First take the vertices of $G$ as group generators and form the free nilpotent-class-2, exponent-$p$ group, which is denoted by $\mathbf{2}_{x\in G} C_p$, where $C_p$ is the cyclic group of order $p$. More precisely, let $*_{x\in G} C_p$ be the free product of $C_p$'s generated by vetices of $G$. Then $\mathbf{2}_{x\in G}C_p=*_{x\in G}C_p/\langle [x,[y,z]]\colon x,y,z\in G \rangle$ where for group elements $x,y$, $[x,y]=x^{-1}y^{-1}xy$ is their commutator. Then $\Gamma(G)$ will be $\mathbf{2}_{x\in G}C_p/\langle [x,y]\colon xGy\rangle$. 

Our construction will use this idea uniformly. We first take the random graph and do the construction to get $\Gamma(\mathcal{R})$. Then $\Gamma(\mathcal{R})$ is generated by the vertex set of $\mathcal{R}$. For each countable graph $G$, let $\Gamma'(G)$ be the subgroup of $\Gamma(\mathcal{R})$ generated by $S(G)$. Then $\Gamma'$ is clearly Borel. 
\begin{lemma}\label{graph to group}
    For each countable graph, $\Gamma(G)$ is isomorphic to $\Gamma'(G)$. Moreover, for countable nice graphs $G$ and $H$, $G\cong H$ iff there is an automorphism of $\Gamma(\mathcal{R})$ sending $\Gamma'(G)$ onto $\Gamma'(H)$. 
\end{lemma}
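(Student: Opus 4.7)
The plan has two parts. For the first claim $\Gamma(G) \cong \Gamma'(G)$, I will exhibit a surjection $\pi \colon \Gamma(G) \to \Gamma'(G)$ and prove it is injective. After identifying $S(G) \cong G$, the universal property of the free nilpotent-class-$2$ exponent-$p$ group gives a homomorphism $\mathbf{2}_{x \in G} C_p \to \Gamma(\mathcal{R})$ sending each generator $x$ to the corresponding vertex of $S(G)$. Because $S(G)$ is an \emph{induced} subgraph of $\mathcal{R}$, each relation $[x,y]=1$ used to define $\Gamma(G)$ (one per edge of $G$) corresponds to an edge of $\mathcal{R}$, hence already holds in $\Gamma(\mathcal{R})$. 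So the map factors through $\Gamma(G)$, and its image is exactly $\Gamma'(G) = \langle S(G) \rangle$.

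For injectivity I will invoke the standard normal form for Mekler's groups: every element of $\Gamma(H)$ for any graph $H$ admits a unique expression of the shape $\prod_v v^{a_v} \cdot \prod_{(u,w)} [u,w]^{b_{u,w}}$, with $a_v, b_{u,w} \in \mathbb{F}_p$ almost all zero, $v$ ranging over $V(H)$ and $(u,w)$ over ordered non-adjacent pairs in some fixed well-ordering of $V(H)$. Applied to $\Gamma(\mathcal{R})$, an element of $\Gamma(G)$ that is trivial in $\Gamma(\mathcal{R})$ must have all $a_v = 0$ and all $b_{u,w} = 0$ for $u,w \in V(G)$ non-adjacent in $\mathcal{R}$; since the embedding $S(G) \hookrightarrow \mathcal{R}$ is induced, ``non-adjacent in $\mathcal{R}$'' coincides with ``non-adjacent in $G$'' on $V(G)$, so the element is already trivial in $\Gamma(G)$. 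This gives $\pi$ injective and hence an isomorphism.

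For the ``moreover'' claim, the forward direction uses Theorem~\ref{graph extension}: an isomorphism $G \cong H$ yields an isomorphism $S(G) \cong S(H)$ which extends to some $\hat{f} \in \aut(\mathcal{R})$; since $\hat f$ permutes $V(\mathcal{R})$ preserving adjacency, applying the universal property to $\hat f$ and $\hat f^{-1}$ produces an induced automorphism of $\Gamma(\mathcal{R})$ that sends $\Gamma'(G) = \langle S(G)\rangle$ onto $\Gamma'(H) = \langle S(H)\rangle$. The reverse direction is immediate: such an automorphism witnesses $\Gamma'(G) \cong \Gamma'(H)$, hence $\Gamma(G) \cong \Gamma(H)$ by the first part, and Theorem~\ref{me}(2) together with the niceness of $G$ and $H$ yields $G \cong H$.

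The main obstacle is the injectivity of $\pi$, which rests on the fact that in $\Gamma(\mathcal{R})$ the generators $v \in V(\mathcal{R})$ remain $\mathbb{F}_p$-linearly independent in the abelianization and the commutators $[u,w]$ of non-adjacent vertices remain $\mathbb{F}_p$-linearly independent in $[\Gamma(\mathcal{R}), \Gamma(\mathcal{R})]$. Both are standard features of Mekler's construction following from the corresponding properties of the free nilpotent-class-$2$ exponent-$p$ group, but pinpointing the right citation (or giving a short presentation-based reproof) is the only nontrivial technical content; the rest of the argument is functorial bookkeeping.
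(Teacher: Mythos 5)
Your proposal is correct and follows essentially the same route as the paper: both reduce the isomorphism $\Gamma(G)\cong\Gamma'(G)$ to the fact that the commutators of distinct generators are $\mathbb{F}_p$-independent in the (center of the) free nilpotent-class-2 exponent-$p$ group, so that non-adjacency being preserved by the \emph{induced} embedding $S(G)\hookrightarrow\mathcal{R}$ forces the kernel to be trivial; and the ``moreover'' direction is handled identically via Theorem~\ref{graph extension} and Theorem~\ref{me}(2). The only cosmetic difference is that you phrase the injectivity via the normal form in the quotients $\Gamma(H)$, whereas the paper works upstairs with the subgroups $N_G\leq N_{\mathcal{R}}$ of $\mathbf{2}_{x\in\mathcal{R}}C_p$ and compares the two representations of a central element.
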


\begin{proof}
    Recall that $\Gamma(G)$ is generated by $G$. Without loss of generality we can assume $G=S(G)\subseteq \mathcal{R}$. We prove that the embedding of generators of $\Gamma(G)$ to generators of $\Gamma(\mathcal{R})$ induces an embedding from $\Gamma(G)$ onto $\Gamma'(G)$. First note that since $\mathbf{2}_{x\in \mathcal{R}}C_p$ and $\mathbf{2}_{x\in G} C_p$ are both free among groups of nilpotent class 2 and exponent $p$, the embedding of generators naturally induces an embedding from the latter into the former. Thus we may assume that the latter is contained in the former. 
    Now let $N_G=\langle[x,y]\colon x,y\in G, xGy \rangle$ and $N_{\mathcal{R}}=\langle[x,y]\colon x,y\in \mathcal{R}, x\mathcal{R}y \rangle$. We define $f:\Gamma(G) \to \Gamma'(G)$ as follows. For $g\in \mathbf{2}_{x\in G}C_p$, we set $f(gN_G)=gN_\mathcal{R}$. Then it suffices to show that $f$ is a well-defined group embedding. 

    For the well-definedness, let $g,h\in \mathbf{2}_{x\in G}C_p$ and suppose that $h^{-1}g\in N_G$. Then since $G$ is a subgraph of $\mathcal{R}$, $h^{-1}g\in N_\mathcal{R}$. Thus $f$ is well-defined. Obviously, $f$ is a homomorphism. Thus we only need to show that $f$ is injective. For this let $g\in \mathbf{2}_{x\in G}C_p$ and suppose that $f(gN_G)=gN_\mathcal{R}=N_\mathcal{R}$, i.e., $g\in N_\mathcal{R}$. We need to show that $g\in N_G$. Here we need a property of the free nilpontent-class-2, exponent-$p$ group. As remarked in \cite{Me}, the center of $\mathbf{2}_{x\in G}C_p$ is a free abelian $p$-group generated by $\{[x,y]\colon x\neq y \in G\}$. Similarly for $\mathbf{2}_{x\in \mathcal{R}}C_p$. 
    Thus if $g\in N_\mathcal{R}$, then $g$ is in the center of $\mathbf{2}_{x\in \mathcal{R}}C_p$. Thus it is also in the center of $\mathbf{2}_{x\in G}C_p$ which means that there are $x_i,y_i\in G$ and $0<k_i<p$ such that $g=[x_0,y_0]^{k_0}\dots[x_n,y_n]^{k_n}$. Also since $g\in N_\mathcal{R}$, there are $x_i',y_i'\in\mathcal{R}$ such that $x_i'\mathcal{R}y_i'$ and $0<k_i'<p$ such that $g=[x_0',y_0']^{k_i'}\dots[x_m',y_m']^{k_i'}$. But then these two representations must be the same due to the freeness. Thus $x_iGy_i$ for all $i$ and $g\in N_G$. 

    For the moreover part, if $G\cong H$, then by Theorem \ref{graph extension}, there is an automorphism of $\mathcal{R}$ sending $S(G)$ onto $S(H)$. Then this automorphism induces an automorphism of $\Gamma(\mathcal{R})$ sending $\Gamma'(G)$ onto $\Gamma'(H)$. Conversely, if there is such an automorphism, then $\Gamma(G)\cong \Gamma'(G)\cong \Gamma'(H)\cong \Gamma(H)$. Thus by Theorem \ref{me} (2), $G\cong H$. 
\end{proof}

Finally we are ready to prove the Borel completeness of the class of all countable universal, omnigenous locally finite groups. First let $f:\Gamma(\mathcal{R})\oplus \mathbb{H}\to \mathbb{H}$ be an embedding such that every automorphism of the image subgroup extends to an automorphism of $\mathbb{H}$. Such an embedding exists by Proposition \ref{PSlemma}. Then let $L(\mathcal{R})=D(\Gamma(\mathcal{R}),f)$. For each nice graph $G$, let $L(G)=D(\Gamma'(G),f)\leq L(\mathcal{R})$. Then for nontrivial element $g\in L(\mathcal{R})$, $g\in L(G)$ iff $\pi(g)\in \Gamma'(G)$, where $\pi$ is as in the proof of Lemma \ref{back to A}. Thus the map $L$ is Borel. We then have: 
\begin{theorem}
    If $G$, $H$ are nice graphs, then the following are equivalent: 
    \begin{enumerate}
        \item[\rm (1)] $G\cong H$;  
        \item[\rm (2)] There is an automorphism of $L(\mathcal{R})$ mapping $L(G)$ onto $L(H)$; 
        \item[\rm (3)] $L(G)\cong L(H)$. 
    \end{enumerate}
\end{theorem}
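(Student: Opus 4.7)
The plan is to prove the three implications $(1)\Rightarrow(2)\Rightarrow(3)\Rightarrow(1)$, with (2)$\Rightarrow$(3) being trivial (an automorphism restricts to an isomorphism on any subgroup) and the other two leaning heavily on the machinery already developed.

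For $(1)\Rightarrow(2)$, I would start from an isomorphism $G\cong H$ and use the first clause of Theorem \ref{graph extension} to obtain isomorphisms $S(G)\cong G\cong H\cong S(H)$. By the second clause of the same theorem, the resulting isomorphism $S(G)\to S(H)$ extends to an automorphism $\sigma\in\aut(\mathcal{R})$. Because $\sigma$ permutes the generators of $\Gamma(\mathcal{R})$ and preserves the defining relations of Mekler's construction (nilpotence of class 2, exponent $p$, and $[x,y]=e$ whenever $x\mathcal{R}y$), it induces an automorphism $\tilde\sigma\in\aut(\Gamma(\mathcal{R}))$ with $\tilde\sigma(\Gamma'(G))=\Gamma'(H)$, since $\Gamma'(G)=\langle S(G)\rangle$ and $\Gamma'(H)=\langle S(H)\rangle$ in $\Gamma(\mathcal{R})$. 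Now I apply Lemma \ref{automorphism} with ambient group $\Gamma(\mathcal{R})$ and subgroups $A=\Gamma'(G)$, $B=\Gamma'(H)$; this is legal precisely because $f$ was chosen via Proposition \ref{PSlemma} so that every automorphism of $f(\Gamma(\mathcal{R})\oplus\mathbb{H})$ extends to $\mathbb{H}$. The conclusion yields an automorphism of $L(\mathcal{R})=D(\Gamma(\mathcal{R}),f)$ sending $L(G)=D(\Gamma'(G),f)$ onto $L(H)=D(\Gamma'(H),f)$.

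For $(3)\Rightarrow(1)$, I go in the opposite direction through the same factorization. From $L(G)\cong L(H)$, Lemma \ref{back to A} gives $\Gamma'(G)\cong L(G)/S(\Gamma'(G),f)\cong L(H)/S(\Gamma'(H),f)\cong\Gamma'(H)$ (the quotient is intrinsic because, by Lemma \ref{special subgroup}, $S(\cdot,f)$ is characterized as the unique nontrivial simple normal subgroup of the $D$-construction). The first part of Lemma \ref{graph to group} then gives $\Gamma(G)\cong\Gamma'(G)\cong\Gamma'(H)\cong\Gamma(H)$, and Mekler's Theorem \ref{me}(2) finally supplies $G\cong H$.

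The step most likely to require care is the induction of $\sigma$ to $\tilde\sigma$ on $\Gamma(\mathcal{R})$ and the verification that $\tilde\sigma$ genuinely maps $\Gamma'(G)$ onto $\Gamma'(H)$: one must check that the surjection $\mathbf{2}_{x\in\mathcal{R}}C_p\twoheadrightarrow\Gamma(\mathcal{R})$ is equivariant under graph automorphisms, which amounts to observing that the normal subgroup $\langle[x,y]\colon x\mathcal{R}y\rangle$ is setwise fixed by the induced action on $\mathbf{2}_{x\in\mathcal{R}}C_p$. This is routine but is the one place where the functoriality of Mekler's construction is being invoked silently. Everything else is bookkeeping on top of the lemmas already in the paper.
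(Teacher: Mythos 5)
Your proposal is correct and follows essentially the same route as the paper: $(1)\Rightarrow(2)$ via the ``moreover'' part of Lemma \ref{graph to group} (the induced automorphism of $\Gamma(\mathcal{R})$) combined with Lemma \ref{automorphism}, $(2)\Rightarrow(3)$ trivially, and $(3)\Rightarrow(1)$ via Lemma \ref{back to A}, the first part of Lemma \ref{graph to group}, and Theorem \ref{me}(2). Your added remarks on the equivariance of Mekler's construction and the intrinsic characterization of $S(\cdot,f)$ just make explicit what the paper leaves implicit.
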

\begin{proof}
    (1)$\Rightarrow$(2) follows from Lemmas \ref{graph to group} and \ref{automorphism}. (2)$\Rightarrow$(3) is obvious. (3)$\Rightarrow$(1) follows from Lemma \ref{back to A} and Theorem~\ref{me}. 
\end{proof}
By Proposition \ref{omnigeneity}, each $L(G)$ is omnigenous. It is also universal for finite groups since it contains a copy of $\mathbb{H}$. The proof of Theorem~\ref{main theorem} is thus complete.
\bibliographystyle{amsplain}

\thebibliography{999}

\bibitem{BK}
H. Becker and A. S. Kechris,
The Descriptive Set Theory of Polish Group Actions.
London Math. Soc. Lecture Note Ser., 232. Cambridge University Press, Cambridge, 1996.

\bibitem{Berlai}
F. Berlai, 
\textit{Residual properties of free products}, 
Comm. Algebra {\bf 44} (2016), no. 7, 2959--2980.

\bibitem{BC}
K. Beserra and S. Coskey,
\textit{On the classification of automorphisms of trees},
Contrib. Discrete Math. {\bf 14} (2019), no. 1, 202--213.

\bibitem{CG}
R. Camerlo and S. Gao, 
\textit{The completeness of the isomorphism relation for countable Boolean algebras}, 
Trans. Amer. Math. Soc. {\bf 353} (2000), no.~2 , 491--518.

\bibitem{CMM}
R. Camerlo, A. Marcone, and L. Motto Ros,
\textit{Polish metric spaces with fixed distance set}, Ann. Pure Appl. Logic {\bf 171} (2020), no. 10, 102832.

\bibitem{Cl}
J. D. Clemens, 
\textit{Isomorphism of homogeneous structures},
Notre Dame J. Form. Log. {\bf 50} (2009), no. 1, 1--22.

\bibitem{CCD}
J. D. Clemens, S. Coskey, S. Dworetzky,
\textit{The classification of countable models of set theory},
Math. Log. Q. {\bf 66} (2020), no. 2, 182--189.

\bibitem{CCP}
J. D. Clemens, S. Coskey, S. Potter,
\textit{On the classification of vertex-transitive structures},
Arch. Math. Logic {\bf 58} (2019), no. 5-6, 565--574.

\bibitem{CE0}
S. Coskey and P. Ellis,
\textit{The conjugacy problem for automorphism groups of countable homogeneous structures},
Math. Logic. Q. {\bf 62} (2016), no. 6, 580--589.

\bibitem{CE1}
S. Coskey and P. Ellis,
\textit{The conjugacy problem for automorphism groups of homogeneous digraphs},
Contrib. Discrete Math. {\bf 12} (2017), no. 1, 62--73.

\bibitem{CE2}
S. Coskey and P. Ellis,
\textit{Conjugacy for homogeneous ordered graphs},
Arch. Math. Logic {\bf 58} (2019), no. 3-4, 457--467.

\bibitem{CES}
S. Coskey, P. Ellis, and S. Schneider,
\textit{The conjugacy problem for the automorphism group of the random graph},
Arch. Math. Logic {\bf 50} (2011), no. 1-2, 215--221.

\bibitem{CK}
S. Coskey and R. Kossak,
\textit{The complexity of classification problems for models of arithmetic},
Bull. Symbolic Logic {\bf 16} (2010), no. 3, 345--358.

\bibitem{Dixon}
M. R. Dixon, 
Sylow Theory, Formation and Fitting Classes in Locally Finite Groups. 
Ser. in Algebra, 2. 
World Scientific Publishing Co. Inc., River Edge, NJ, 1994.

\bibitem{EGLMM}
M. Etedadialiabadi, S. Gao, F. Le Ma\^{\i}tre, and J. Melleray,
\textit{Dense locally finite subgroups of automorphism groups of ultraextensive spaces},
Adv. Math. {\bf 391} (2021), 107966.

\bibitem{FS}
H. Friedman and L. Stanley, 
\textit{A Borel reducibility theory for classes of countable structures}, 
J. Symbolic Logic {\bf 54} (1989), no.~3, 894--914. 

\bibitem{GaoBook}
S. Gao,
Invariant Descriptive Set Theory. 
Pure Appl. Math. (Boca Raton), 293.
CRC Press, Boca Raton, FL, 2009.

\bibitem{GK}
S. Gao and A. S. Kechris,
\textit{On the classification of Polish metric spaces up to isometry},
Mem. Amer. Math. Soc. {\bf 161} (2003), no. 766, viii+78pp.

\bibitem{Gruenberg}
K. W. Gruenberg, \textit{Residual properties of infinite soluble groups}, Proc. London Math. Soc. {\bf 7}  (1957), no. 3, 29--62.

\bibitem{Hall}
P. Hall, 
\textit{Some constructions for locally finite groups},
J. London Math. Soc. {\bf 34} (1959), 305--319. 

\bibitem{Ho}
W. Hodges,
Model Theory.
Encyclopedia Math. Appl., 42, Cambridge University Press, Cambridge, 1993.

\bibitem{Ma}
W. Magnus, 
\textit{Residually finite groups}, 
Bull. Amer. Math. Soc. {\bf 75} (1969), 305--316.

\bibitem{Mal}
M. Malicki,
\textit{Isomorphism of locally compact Polish metric structures},
J. Symbolic Logic {\bf 89} (2024), no. 2, 646--664.

\bibitem{Me}
A. H. Mekler, 
\textit{Stability of nilpotent groups of class 2 and prime exponent}, 
J. Symbolic Logic {\bf 46} (1981), 781--788.

\bibitem{PS1}
G. Paolini and S. Shelah, 
\textit{Torsion-free abelian groups are Borel complete}, 
Ann. of Math. (2) {\bf 199} (2024), no.~3, 1177--1224.

\bibitem{PS}
G. Paolini and S. Shelah, 
\textit{The automorphism group of Hall's universal group}, 
Proc. Amer. Math. Soc. {\bf 146} (2018), no.~4, 1439--1445.

\end{document}